\documentclass{article}

\usepackage{geometry}
\usepackage{amsmath,amssymb,amsthm}
\usepackage[colorlinks=true,citecolor=blue,linkcolor=blue,urlcolor=blue]{hyperref}

\newtheorem{theorem}{Theorem}[section]
\newtheorem{lemma}[theorem]{Lemma}

\newcommand{\aTwo}{\alpha_2}
\newcommand{\aZero}{\alpha_0}

\title{A cubic refinement of Jackson's Chv\'atal--Erd\H{o}s condition\\ for Hamilton cycles in digraphs}
\author{Jiangdong Ai\thanks{School of Mathematical Sciences and LPMC, Nankai University. {\tt jd@nankai.edu.cn}. Partially supported by the National Natural Science Foundation of China (No. 12522117) and Fundamental and Interdisciplinary Disciplines Breakthrough Plan of the Ministry of Education of China (JYB2025XDXM207).}
\hspace{2mm}
    Yongtang Shi\thanks{Center for Combinatorics and LPMC, Nankai University, Tianjin 300071, China. {\tt{shi@nankai.edu.cn.}}
    Partially supported by the National Natural Science Foundation of China (No. 12431013).}
}
\date{}

\begin{document}
\maketitle

\begin{abstract}
For a digraph $D$, let $\aTwo(D)$ be the largest size of a vertex set no two of whose vertices lie in a common directed $2$-cycle.  Let $f_2(a)$ be the least integer $K$ such that every $K$-connected digraph $D$ with $\aTwo(D)\leq a$ has a Hamilton cycle. In 1987, Jackson proved that $f_2(a)\leq 2^a(a+2)!$ and asked for better bounds,
noting that a linear bound might be possible.  K\"uhn and Osthus later
observed that even a polynomial bound would be interesting.

In this short note, we prove the polynomial bound
$f_2(a)\leq 2a^3+2$.
\end{abstract}

\section{Introduction}
 
One of the best known sufficient conditions for Hamiltonicity is the theorem of Chv\'atal and Erd\H{o}s~\cite{ChvatalErdos}: if a graph $G$ on at least three vertices satisfies $\kappa(G)\geq\alpha(G)$, where $\kappa$ and $\alpha$ denote the vertex-connectivity and the independence number, then $G$ has a {\em Hamilton cycle}, that is, a cycle through all of its vertices; if moreover $\kappa(G)\geq\alpha(G)+1$, then $G$ is {\em Hamilton-connected}, meaning that every two distinct vertices are the endpoints of a {\em Hamilton path} (a path through all vertices). We refer to~\cite{BangJensenGutin} for terminology and notation not introduced here, and consider only finite digraphs without loops or multiple arcs; in a digraph, Hamilton cycles and paths are required to be directed.
 
A directed analogue requires a directed substitute for $\alpha$. For a digraph $D$, let $G(D)$ be the graph on $V(D)$ in which $uv$ is an edge precisely when $u$ and $v$ form a {\em directed $2$-cycle} of $D$, i.e.\ $D$ contains both arcs $u\to v$ and $v\to u$, and let $\aTwo(D)=\alpha(G(D))$; thus $\aTwo(D)$ is the largest number of vertices no two of which lie in a common $2$-cycle. Replacing each edge of a graph $G$ by a $2$-cycle produces a digraph $D$ with $\aTwo(D)=\alpha(G)$, so $\aTwo$ extends $\alpha$. This is the parameter used in Jackson's directed Chv\'atal-Erd\H{o}s theorem. For the smaller parameter $\aZero(D)$, the largest number of vertices spanning no arc, it is open whether any finite connectivity forces Hamiltonicity~\cite{KuhnOsthus}. 

We say a digraph is {\em strongly connected} if it contains a directed path from each vertex to every other one. Throughout, a digraph $D$ is
$K$-connected if $|V(D)|\ge K+1$ and $D-X$ is strongly connected for every
$X\subseteq V(D)$ with $|X|<K$. We write $k(D)$ for the largest such $K$.
 
Equality of connectivity and $\aTwo$ does not suffice: Thomassen~\cite{Thomassen} and Chakroun~\cite{Chakroun} constructed non-Hamiltonian $k$-connected digraphs with $\aTwo=k$ for $k\in\{2,3\}$. The right question is how much connectivity, as a function of $\aTwo$, forces a Hamilton cycle. Let $f_2(a)$ be the least $K$ for which every $K$-connected digraph $D$ with $\aTwo(D)\leq a$ is Hamiltonian. Jackson~\cite{Jackson} proved that 
$$f_2(a)\leq 2^a(a+2)!.$$ To the best of our knowledge, no polynomial upper bound for $f_2$ was previously known.
In the other direction $f_2(a)\geq a$, with $f_2(1)=1$, $f_2(2)=3$, $f_2(3)=4$~\cite{Jackson,JacksonOrdaz}; Jackson and Ordaz~\cite{JacksonOrdaz} conjectured that $f_2$ is linear. That conjecture remains open, and the gap between it and Jackson's bound is enormous: in their survey, K\"uhn and Osthus~\cite{KuhnOsthus} ask whether even a bound on $\kappa$ polynomial in $\aTwo$ exists.
 
In this note, we give such a polynomial bound.
 
\begin{theorem}\label{thm:main}
For every integer $a\geq 1$, $f_2(a)\leq 2a^3+2$; that is, every $(2a^3+2)$-connected digraph $D$ with $\aTwo(D)\leq a$ is Hamiltonian.
\end{theorem}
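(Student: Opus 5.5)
Take a longest cycle $C$ in $D$ and suppose $v\notin V(C)$; we aim for a contradiction. Cycles exist because $D$ is strongly connected, and with $K=2a^3+2$ the cycle has length at least $K$ by a Menger-type fan argument. I would follow the Chv\'atal--Erd\H{o}s strategy, made directed: use connectivity to get many internally disjoint paths from $v$ to $C$ and from $C$ to $v$, look at the successors and predecessors on $C$ of their endpoints, and use $\aTwo\le a$ to find pairs of such vertices joined by a $2$-cycle. Each such pair should give a cycle longer than $C$.

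\textbf{Step 1.} By Menger, $D$ has $K$ paths from $v$ to $C$ that share only $v$ and have distinct endpoints $x_1,\dots,x_K$ on $C$. It also has $K$ such paths from $C$ to $v$, with endpoints $y_1,\dots,y_K$. Take the out-fan and in-fan so that they avoid each other as far as possible; $2K$-type connectivity would be needed for full avoidance, which is one reason for the constant $2$.

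\textbf{Step 2 (extremality).} Let $x_i^+$ be the successor of $x_i$ on $C$. In the classical argument no two of the $x_i^+$ are adjacent, which is what makes them an independent set. In the directed setting, a $2$-cycle between $x_i^+$ and $x_j^+$ alone does not immediately give a longer cycle, because the orientation of the rerouted segment may be wrong. So we should consider mixed configurations: an in-path ending at $y_j$, an out-path starting at $x_i$, with $y_j$ after $x_i$ on $C$, and a $2$-cycle between $x_i^+$ and $y_j^+$ (or between other neighbouring vertices). Following $C$ from $x_i^+$ to $y_j$, then the fan $y_j\to v\to x_i$, then $C$ backwards is impossible in a digraph. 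Instead, use the pattern
\[
x_i \to v \to y_j \ \text{(fan)},\qquad x_i^+ \leftrightarrow y_j^+ ,
\]
which gives the cycle $v\,x_i\,C[x_i,y_j]^{-}$. A backward traversal is not allowed, so this must be reorganized using interleavings in the standard way for directed cycles.

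\textbf{Step 3 (Ramsey/counting).} The key step is a counting argument. Among $K$ endpoints, $\aTwo\le a$ together with Ramsey-type iteration (take $a+1$ vertices, find a $2$-cycle, delete, repeat) produces many $2$-cycle pairs. We then need a pair whose positions along $C$ interleave correctly with the fan endpoints. I would partition the endpoints into blocks by cyclic order and iterate $\aTwo\le a$ three times: choose $a+1$ blocks, then $a+1$ representatives per block, then $a+1$ directions. This is where a bound of order $a^3$ plus slack would come from. At the end, insert $v$ into $C$ via crossing chords to get a longer cycle, which is the contradiction.

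\textbf{Main obstacle.} The step I expect to be hardest is Step 3: turning $2$-cycles among successor and predecessor vertices into a longer directed cycle, which needs the correct cyclic interleaving of the chords. What I would try first is to choose the pair minimizing the distance along $C$, as in Jackson's proof. Then I would show that a minimal counterexample forces a nested structure of depth at most $a$ on each of three levels, which yields the bound $2a^3+2$.
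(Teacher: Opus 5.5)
Your proposal has a genuine gap, and it sits exactly at the step you flag as the main obstacle. Nothing in Steps 2--3 actually converts $2$-cycles into a longer cycle.

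In the undirected proof a single edge $x_i^+x_j^+$ suffices because the segment between the two endpoints can be traversed in reverse. A $2$-cycle $x_i^+\leftrightarrow x_j^+$ supplies both orientations of that one chord, but it does not fix the orientation of the segment, as your own expression $v\,x_i\,C[x_i,y_j]^{-}$ shows. Suppose instead you use an in-fan endpoint $y_j$ and an out-fan endpoint $x_i$. The cycle formed by $C[x_i,y_j]$ and the fan paths from $y_j$ to $v$ and from $v$ to $x_i$ skips $C[y_j^+,x_i^-]$. Splicing that segment back in needs two coordinated arcs, e.g.\ $w\to y_j^+$ and $x_i^-\to w^+$ for one common $w\in C[x_i,y_j)$. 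The hypothesis $\alpha_2(D)\le a$ only guarantees that among any $a+1$ vertices some pair is joined by a $2$-cycle. Iterating it yields many disjoint $2$-cycle pairs, but nothing forces two of them to interlock in such a pattern, and there is no ``standard way for directed cycles'' to arrange this.

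In fact, if a single $2$-cycle among fan-endpoint neighbours always gave a longer cycle, the classical count would give Hamiltonicity at connectivity about $\alpha_2(D)$. The Thomassen and Chakroun examples rule that out. So your argument rests on an interleaving lemma you never state. The ``three levels of $a+1$'' and the ``nested structure of depth at most $a$'' are fitted to the target $2a^3+2$ rather than derived.

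The paper does not reroute a longest cycle at all. It splits the $2$-cycle graph $G(D)$ by repeatedly removing vertex cuts of size at most $B=2a^2-a-1$. This gives $t\le a$ pieces, each complete or with connectivity greater than $B$, with $\sum_i\alpha(F_i)\le a$, plus a leftover set $S$ with $|S|\le (t-1)B$. On the symmetric digraphs $A_i$ formed by the $2$-cycles of each piece, orientation is irrelevant, so the undirected Chv\'atal--Erd\H{o}s theorem applies there; this is the only place $2$-cycles are used.

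The set $S$ is covered by at most $a$ directed paths (Gallai--Milgram, using $\alpha_0\le\alpha_2$). Each path is contracted to a single vertex, at a total connectivity cost of at most $|S|$. Jackson's linking lemma (his Lemma~5) then joins the pieces and the contracted paths into a Hamilton cycle. The cubic term comes from $|S|\le(a-1)(2a^2-a-1)$, with $B$ chosen just large enough for condition~(iii) of the linking lemma, not from a three-fold Ramsey iteration.
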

To the best of our knowledge, this is the first polynomial upper bound for
$f_2$.  Thus, the gap between the known lower bound $f_2(a)\ge a$ and the best known upper bound is reduced from factorial to polynomial.

\section{Preliminaries}

A digraph is {\em symmetric} if every arc lies in a directed $2$-cycle; equivalently, it is obtained from an undirected graph by replacing every edge by two opposite arcs. A {\em complete symmetric} digraph is defined analogously.

We use three standard ingredients. First, the Chv\'atal--Erd\H{o}s theorem~\cite{ChvatalErdos} says that a $2$-connected undirected graph $G$ with $\kappa(G)\geq \alpha(G)$ is Hamiltonian; moreover, if $\kappa(G)\geq \alpha(G)+1$, then $G$ is Hamilton-connected. Equivalently, a symmetric digraph $H$ with $k(H)\geq \aTwo(H)$ is
Hamiltonian, and if $k(H)\geq \aTwo(H)+1$ then it is Hamilton-connected.

Second, recall that a {\em directed path cover} of a digraph is a set of pairwise vertex-disjoint directed paths whose union contains all of its vertices, the {\em size} of the cover being the number of paths in it (isolated vertices count as trivial paths). The Gallai--Milgram theorem~\cite{GallaiMilgram} says that every digraph $D$ has a directed path cover of size at most $\aZero(D)$.

Third, we use the following linking lemma of Jackson~\cite{Jackson}. The lemma is stated in the form needed below; the singleton pieces are allowed to be complete symmetric digraphs of order one.

\begin{lemma}[Jackson~{\cite[Lemma 5]{Jackson}}]\label{lem:jackson}
Let $D$ be a digraph. Suppose that $V(D)$ can be covered with vertex-disjoint symmetric digraphs
$H_1,H_2,\ldots,H_m$
such that
\begin{enumerate}
    \item[\rm(i)] $\aTwo(H_i)\geq \aTwo(H_j)$ whenever $1\leq i<j\leq m$;
    \item[\rm(ii)] $|H_i|=1$ for $t<i\leq m$;
    \item[\rm(iii)] $H_i$ is either a complete symmetric digraph or  else $k(H_i)>(2m-3)\aTwo(H_i)
    $ for each $1\leq i\leq m$, and 
    \item[\rm(iv)]
    $k(D)>2\sum_{i=1}^t\Big((m-1-i)\aTwo(H_i)+1\Big)+(m-t).
$
\end{enumerate}
Then $D$ is Hamiltonian.
\end{lemma}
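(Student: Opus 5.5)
The plan is to build a Hamilton cycle by first \emph{linking} the pieces $H_1,\dots,H_m$ in a cyclic order by short vertex-disjoint directed paths of $D$, and then \emph{filling in} each piece by a Hamilton path of what remains of it. The filling step uses the Chv\'atal--Erd\H{o}s theorem in its Hamilton-connected form. If $S\subseteq V(H_i)$ with $|S|$ not too large, then
$$k(H_i-S)\ge k(H_i)-|S|\quad\text{and}\quad \aTwo(H_i-S)\le \aTwo(H_i).$$
So if $|S|\le (2m-4)\aTwo(H_i)$, condition (iii) gives $k(H_i-S)>\aTwo(H_i)\ge\aTwo(H_i-S)$. Hence $H_i-S$ is Hamilton-connected. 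If $H_i$ is complete symmetric, this holds trivially for any $S$.

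The linking step goes as follows. Relabel cyclically and seek vertices $x_i,y_i\in V(H_i)$ (with $x_i=y_i$ when $|H_i|=1$) and pairwise internally disjoint $y_i$--$x_{i+1}$ paths $P_i$ in $D$, indices taken mod $m$. Each $P_i$ may pass through other pieces. Let $S_j$ be the set of interior vertices of the $P_i$ lying in $H_j$. We then want:
\begin{itemize}
\item[(a)] every singleton piece is either some $x_i$ or lies inside some $P_i$;
\item[(b)] $|S_j|\le(2m-4)\aTwo(H_j)$ for each non-complete $H_j$ with $j\le t$.
\end{itemize}
Given (a) and (b), take a Hamilton $x_j$--$y_j$ path of $H_j-S_j$ for every $j$. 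Concatenating these with the $P_i$ gives a Hamilton cycle.

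To control $|S_j|$, I would choose the linkage to minimise the total length, or a lexicographic version weighted by the ordering in (i). The key observation is that a minimal path $P$ cannot contain $\aTwo(H_j)+1$ vertices of $H_j$ that are far apart along $P$. Indeed, among any $\aTwo(H_j)+1$ vertices of the symmetric digraph $H_j$, two lie in a common $2$-cycle. The arc between them that points forward along $P$ gives a shortcut, unless the skipped segment is needed elsewhere. Making this precise should show that each path meets $H_j$ in at most about $2\aTwo(H_j)$ vertices. Because the linkage is cyclic, a path beginning or ending inside $H_j$ contributes less. Summing over the $m-1$ or so paths that are not anchored at $H_j$ gives the bound (b), and this is the source of the factor $2m-3$ in (iii).

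The existence of the linkage is where condition (iv) enters. I would construct the paths greedily in the order $i=1,\dots,t$, using Menger's theorem in $D$. When routing $P_i$, we must avoid a protected set of vertices. This set consists of the at most $2\big((m-1-i)\aTwo(H_i)+1\big)$-type budget of vertices reserved for the later paths and endpoints, together with the $m-t$ singletons. The bound in (iv) exceeds the total size of what must be deleted, so a path survives at each stage. Singletons are absorbed by routing each through some $P_i$, which the term $m-t$ pays for.

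I expect the main obstacle to be the interaction between minimality and disjointness. A shortcut inside $H_j$ can shorten one path while colliding with another path or with a chosen endpoint. The exchange argument must therefore be carried out for the whole linkage, with a potential function that strictly decreases. The first thing I would try is the potential given by the lexicographic vector
$$\big(|S_1|,|S_2|,\dots,|S_t|\big),$$
ordered as in (i). I would then show that a violation of (b) permits a local reroute through a $2$-cycle of $H_j$ that lowers this vector without destroying disjointness.
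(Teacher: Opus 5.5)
The paper gives no proof of this lemma; it quotes it from Jackson, so there is nothing in the paper to compare against. Judged on its own, your proposal has a genuine gap, and it sits exactly where the content of the lemma lies: the existence of the cyclic linkage.

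\textbf{The linkage step does not close, and as transcribed the statement is false.} Your claim that ``the bound in (iv) exceeds the total size of what must be deleted'' is unsupported. When $P_i$ is routed it must avoid every vertex of $P_1,\dots,P_{i-1}$. The only control your sketch has on such a path is the shortcut bound: at most $2\aTwo(H_j)$ vertices in each non-anchor piece. Take $t=m-1$ with all $\aTwo(H_j)=\alpha$. Then that bound lets the first $m-1$ paths carry $2(m-2)^2\alpha+2(m-1)$ vertices. But (iv) only gives $k(D)>(m-1)(m-2)\alpha+2m-1$, which is smaller as soon as $m\ge 4$.

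Worse, when $t=m$ the last summand of (iv) is $2\bigl(1-\aTwo(H_m)\bigr)\le 0$, and then the lemma fails. Take $m=t=2$, and let $H_1,H_2$ be disjoint copies of the complete symmetric digraph on five vertices with one $2$-cycle removed. Then $\aTwo(H_i)=2$ and $k(H_i)=3>(2m-3)\aTwo(H_i)$. Add one arc from a vertex $v\in V(H_1)$ into $H_2$ and one arc from $H_2$ back to $v$. The right-hand side of (iv) is $2\bigl(1+(1-2)\bigr)+0=0<1=k(D)$. Yet $D-v$ has no arcs between $V(H_1)\setminus\{v\}$ and $V(H_2)$, so it has no Hamilton path, and $D$ is not Hamiltonian. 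In your language: $P_2$ must avoid the at least two vertices of $P_1$, while (iv) guarantees only $k(D)\ge 1$.

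So an extra hypothesis (such as $t<m$) or a corrected right-hand side is needed. This matters for the paper too, since its application allows $q=0$, i.e.\ $m=t$. Even with such a fix, a one-shot greedy count will not produce (iv). The right-hand side of (iv) drops by exactly $2\bigl((m-2)\aTwo(H_1)+1\bigr)$ when $H_1$ is removed from the list. That suggests an induction on $m$ absorbing one piece at a time, rather than routing all paths against a single budget.

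\textbf{The obstacle you single out is not one, and the filling step is sound.} A shortcut along a $2$-cycle only deletes vertices from a path, so it can never collide with another path. Your two treatments of singletons (as anchors, and as interior vertices of paths) are inconsistent. Put all $m$ pieces into the cyclic order, with singletons as anchors $x_j=y_j$. Then choose the linkage with free endpoints and minimum total length. Minimality forces $P_{j-1}\cap V(H_j)=\{x_j\}$ and $P_j\cap V(H_j)=\{y_j\}$. Every other $P_i$ meets $H_j$ in a set whose $2$-cycle graph has edges only between vertices consecutive on $P_i$. That graph is a subgraph of a path, so the set has at most $2\aTwo(H_j)$ vertices. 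Hence $|S_j|\le(2m-4)\aTwo(H_j)$ immediately, with no lexicographic potential, and your Chv\'atal--Erd\H{o}s filling step goes through. The whole lemma therefore reduces to producing such a linkage from (iv), and that is precisely the step your proposal leaves unproved.
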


\section{A fixed-threshold decomposition}

The following elementary decomposition is a fixed-threshold variant of Jackson's Lemma~2~\cite{Jackson}.

\begin{lemma}\label{lem:decomp}
Let $G$ be an undirected graph with $\alpha(G)\leq a$, and let $B\geq 0$ be an integer. Then there are a set $S\subseteq V(G)$ and vertex-disjoint induced subgraphs
$F_1,\ldots,F_t$ covering $V(G)\setminus S$ such that
\begin{enumerate}
    \item[\rm(i)] $t\leq a$;
    \item[\rm(ii)] $\sum_{i=1}^t \alpha(F_i)\leq a$;
    \item[\rm(iii)] $|S|\leq (t-1)B$;
    \item[\rm(iv)] every $F_i$ is either complete or else $\kappa(F_i)>B$.
\end{enumerate}
\end{lemma}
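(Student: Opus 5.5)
The plan is to prove the stronger statement in which $a$ is replaced by $\alpha(G)$ itself. We assume $G$ is nonempty; for empty $G$ there is nothing to cover, and $t\ge 1$ is needed to make (iii) meaningful. So we show: for every nonempty graph $G$ there are $S$ and $F_1,\dots,F_t$ with $t\le\alpha(G)$, $\sum_i\alpha(F_i)\le\alpha(G)$, $|S|\le (t-1)B$, and each $F_i$ either complete or with $\kappa(F_i)>B$. This implies the lemma because $\alpha(G)\le a$. The argument is a recursive splitting along small separators, by induction on $|V(G)|$ (equivalently on $\alpha(G)$).

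\emph{Base case.} If $G$ is complete or $\kappa(G)>B$, take $S=\emptyset$, $t=1$ and $F_1=G$. All four conditions are immediate, since $(t-1)B=0$ and $\alpha(G)\ge 1$.

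\emph{Splitting step.} Otherwise $G$ is not complete and $\kappa(G)\le B$. A non-complete graph has a vertex cut of size exactly $\kappa(G)$, so there is a set $X$ with $|X|\le B$ such that $G-X$ is disconnected. Choose one component $C$ of $G-X$, and set $G_1=G[C]$ and $G_2=G-X-C$. Both are nonempty induced subgraphs with no edges between them. Hence $\alpha(G_1)+\alpha(G_2)=\alpha(G-X)\le\alpha(G)$, and each $\alpha(G_j)\ge1$. Both $G_j$ are smaller than $G$, so induction gives sets $S_j$ and pieces $F^{(j)}_1,\dots,F^{(j)}_{t_j}$ with
\[
t_j\le\alpha(G_j),\qquad \sum_i\alpha(F^{(j)}_i)\le\alpha(G_j),\qquad |S_j|\le(t_j-1)B .
\]
An induced subgraph of $G_j$ is an induced subgraph of $G$, so we may take $S=X\cup S_1\cup S_2$ and, as pieces, all the $F^{(j)}_i$, with $t=t_1+t_2$.

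\emph{Checking the bounds.} First, $t\le\alpha(G_1)+\alpha(G_2)\le\alpha(G)$. Second, the $\alpha$-sum is at most $\alpha(G_1)+\alpha(G_2)\le\alpha(G)$. Third,
\[
|S|\le B+(t_1-1)B+(t_2-1)B=(t-1)B .
\]
Condition (iv) is inherited from the two halves. The pieces are disjoint and cover $V(G)\setminus S$, because $V(G)=X\cup V(G_1)\cup V(G_2)$.

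The only point needing care, rather than a genuine obstacle, is the bookkeeping in (iii). Each split costs one separator of size at most $B$ and increases the number of pieces by exactly one. The induction on $|V(G)|$ makes this accounting automatic, giving $|S|\le (t-1)B$ with no slack lost.
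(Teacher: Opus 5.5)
Your proof is correct and is essentially the paper's argument. The paper runs the same splitting process iteratively: it cuts a non-complete part with $\kappa\le B$ along a separator of size at most $B$, notes that the sum of the $\alpha(F_i)$ never increases (so there are at most $a$ parts), and counts $t-1$ splits. You instead package this as an induction on $|V(G)|$ with $\alpha(G)$ in place of $a$, which makes termination and the $(t-1)B$ count automatic.
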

\begin{proof}
Start with $G$ and $S=\emptyset$.  Whenever a current
part $F$ is not complete and satisfies $\kappa(F)\le B$, choose a vertex cut
$R$ of $F$ with $|R|\le B$; if $F$ is disconnected, take $R=\emptyset$.
Move $R$ to $S$ and replace $F$ by two nonempty induced subgraphs whose
vertex sets are unions of components of $F-R$.

There are no edges between the two new parts, so the sum of the independence
numbers of the current parts does not increase.  Since every nonempty part
has independence number at least $1$, the number of parts is always at most
$a$.  If the final number of parts is $t$, exactly $t-1$ splits have occurred,
and hence $|S|\le (t-1)B$.  At termination every part is complete or has
connectivity greater than $B$.
\end{proof}

\section{Proof of the cubic bound}
\begin{proof}[Proof of Theorem~\ref{thm:main}]Fix $a\geq 1$, and let
$K=2a^3+2$ and $B=2a^2-a-1.$
Let $D$ be a $K$-connected digraph with $\aTwo(D)\leq a$.

Apply Lemma~\ref{lem:decomp} to the $2$-cycle graph $G(D)$. We obtain a set $S$ and induced subgraphs $F_1,\ldots,F_t$ of $G(D)$ covering $V(D)\setminus S$ such that
\begin{equation}\label{eq:decomp-data}
        t\leq a,
        \qquad
        \sum_{i=1}^t \alpha(F_i)\leq a,
        \qquad
        |S|\leq (t-1)B,
\end{equation}
with every $F_i$ complete or $\kappa(F_i)>B$. Let $A_i$ be the symmetric subdigraph on $V(F_i)$ whose arcs are exactly the two arcs $u\to v$, $v\to u$ of each $2$-cycle of $D$ with $uv\in E(F_i)$. (We take only these $2$-cycle arcs though the induced subdigraph $D[V(F_i)]$ may contain further one-way arcs and need not be symmetric.) Since $F_i$ is an induced subgraph of $G(D)$, we have $G(A_i)=F_i$, and therefore
$\aTwo(A_i)=\alpha(F_i)$ and $k(A_i)=\kappa(F_i)$.
In particular each $A_i$ is complete symmetric or satisfies $k(A_i)>B$.

If $t=1$, then no split occurred, so $S=\emptyset$ and $A_1$ spans $D$. If $A_1$ is complete symmetric, then $D$ is Hamiltonian. Otherwise $k(A_1)>B$. For $a=1$, the case $A_1$ non-complete cannot occur because $\aTwo(A_1)\leq 1$ forces $A_1$ to be complete symmetric. For $a\geq2$, we have $B\geq a\geq \aTwo(A_1)$, and since $k(A_1)$ is an
integer, $k(A_1)>B$ implies $k(A_1)\geq \aTwo(A_1)+1$.  Thus $A_1$ is even
Hamilton-connected, and in particular Hamiltonian. Thus we may assume from now on that $t\geq 2$.

Next cover the exceptional set $S$ by directed paths. Since every vertex set inducing no arcs also contains no directed $2$-cycle, we have $\aZero(D[S])\leq \aTwo(D[S])\leq a$.
By Gallai--Milgram, $D[S]$ has a vertex-disjoint directed path cover $P_1,\ldots,P_q$
with $q\leq a$. If $S=\emptyset$, then $q=0$. Write
$P_j=s_j\cdots r_j$,
where $s_j$ is the first vertex and $r_j$ is the last vertex of $P_j$.

Construct an auxiliary digraph $D^*$ from $D$ by replacing each path $P_j$ with one new vertex $p_j$. More precisely,
$$
        V(D^*)=\bigl(V(D)\setminus S\bigr)\cup\{p_1,\ldots,p_q\}.
$$
The arcs inside $V(D)\setminus S$ are the same as in $D$, and the arcs involving the new vertices are defined by
\begin{align*}
        u\to p_j \text{ in } D^*
        &\Longleftrightarrow
        u\to s_j \text{ in } D, \\
        p_j\to u \text{ in } D^*
        &\Longleftrightarrow
        r_j\to u \text{ in } D, \\
        p_i\to p_j \text{ in } D^*
        &\Longleftrightarrow
        r_i\to s_j \text{ in } D.
\end{align*}
Loops are ignored. Any Hamilton cycle of $D^*$ lifts to a Hamilton cycle of $D$ by replacing each $p_j$ with the directed path $P_j=s_j\cdots r_j$.

We need a connectivity estimate for $D^*$.

\begin{lemma}\label{lem:contraction}
With the notation above,
$k(D^*)\geq k(D)-|S|$.
\end{lemma}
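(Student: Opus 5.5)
Set $k=k(D)$ and $N=|V(D^*)|=|V(D)|-|S|+q$. Two things must be shown: (a) $N\ge k-|S|+1$, and (b) $D^*-X$ is strongly connected for every $X\subseteq V(D^*)$ with $|X|<k-|S|$. Part (a) is immediate, since $|V(D)|\ge k+1$ gives $N\ge k+1-|S|+q\ge k-|S|+1$. If $k-|S|\le 0$ there is nothing to prove, so assume $k>|S|$.

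For (b), fix such an $X$ and write $X=X_0\cup X_p$, where $X_0\subseteq V(D)\setminus S$ and $X_p\subseteq\{p_1,\dots,p_q\}$. Let $Y=X_0\cup S$, a subset of $V(D)$ with $|Y|\le |X|+|S|<k$. Then $D-Y$ is strongly connected. It is also nonempty, because $|V(D)|\ge k+1>|Y|$. The vertex set of $D-Y$ is exactly $V(D^*)\setminus(X\cup\{p_1,\dots,p_q\})$, and its arcs are the same in $D$ and $D^*$. So the subdigraph of $D^*-X$ on the original vertices, call it $W$, is strongly connected and nonempty.

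It remains to attach each surviving $p_j\notin X_p$ to $W$ in both directions. I first apply the same argument to $Y'=X_0\cup(S\setminus\{s_j\})$. Since $|Y'|<k$, the digraph $D-Y'$ is strongly connected. It contains $s_j$ and at least one vertex of $W$, because $|V(D)|-|Y'|\ge 2$. Hence some arc $s_j\to w$ or $w\to s_j$ exists, but this alone gives no directed path from $W$ to $s_j$ in $D^*$. The useful consequence is this: in $D-Y'$ there is a path from $W$ to $s_j$, and since every vertex other than $s_j$ lies in $W$, its last arc is $w\to s_j$ with $w\in W$. This gives $w\to p_j$ in $D^*$. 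Symmetrically, using $Y''=X_0\cup(S\setminus\{r_j\})$, the path from $r_j$ to $W$ starts with an arc $r_j\to w'$, which gives $p_j\to w'$ in $D^*$. (When $s_j=r_j$, one $Y'$ serves for both.) Thus every remaining $p_j$ has an in-neighbour and an out-neighbour in the strongly connected $W$, so $D^*-X$ is strongly connected.

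The only real obstacle is making sure $W$ is nonempty and that $D-Y'$ contains a vertex besides $s_j$. Both follow from the vertex count $|V(D)|\ge k+1$ together with $|Y'|\le |X|+|S|-1\le k-2$. The rest is bookkeeping.
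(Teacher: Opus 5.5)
Your proof is correct and rests on the same idea as the paper's: deleting $S$ minus the relevant endpoint(s), together with $X\cap(V(D)\setminus S)$, removes fewer than $k(D)$ vertices, so the strong connectivity of $D$ can be used. The paper applies this once per pair $x,y$, keeping both $\widehat{x}$ and $\widehat{y}$ and translating an $\widehat{x}\widehat{y}$-path directly, whereas you first get a strongly connected core $W$ and then give each surviving $p_j$ an in-arc and an out-arc, keeping one endpoint at a time; you also check the order condition $|V(D^*)|\geq k(D)-|S|+1$, which the paper's proof leaves implicit.
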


\begin{proof}
Let $X\subseteq V(D^*)$ with $|X|<k(D)-|S|$. We show that $D^*-X$ is strongly connected. Take distinct vertices $x,y\in V(D^*)\setminus X$.
 
For $x\in V(D^*)$ define a representative $\widehat{x}\in V(D)$ by
$$
        \widehat{x}=
        \begin{cases}
        r_i, & x=p_i,\\
        x, & x\in V(D)\setminus S,
        \end{cases}
$$
and similarly, for $y\in V(D^*)$, define $\widehat{y}\in V(D)$ using the first vertex $s_j$ in place of the last:
$$
\widehat{y}= \begin{cases}
 s_j, & y=p_j,\\
 y, & y\in V(D)\setminus S. \end{cases}
$$
Let $
Y=\bigl(S\setminus\{\widehat{x},\widehat{y}\}\bigr)\cup\bigl(X\cap(V(D)\setminus S)\bigr)$.
Then $|Y|\leq |S|+|X|<k(D)$, so $D-Y$ is strongly connected. Hence there is a directed $\widehat{x}\widehat{y}$-path in $D-Y$. This path avoids $S$ except possibly at its first and last vertices. By the definition of the arcs of $D^*$, it translates to a directed $xy$-path in $D^*-X$. Therefore $D^*-X$ is strongly connected.
\end{proof}

We next verify the hypotheses of Lemma~\ref{lem:jackson} for $D^*$. Order the pieces $A_1,\ldots,A_t$ so that
$\nu_i=\aTwo(A_i)$
satisfies $\nu_1\geq \nu_2\geq\cdots\geq \nu_t$. Since each $\nu_i\geq 1$ and each singleton $p_j$ has $\aTwo=1$, the full cover
$$A_1,\ldots,A_t,\{p_1\},\ldots,\{p_q\}$$
of $D^*$ is ordered nonincreasingly by $\aTwo$. Let $m=t+q$.
First consider condition~(iii) of Lemma~\ref{lem:jackson}. Because $t\geq 2$ and $\sum_{i=1}^t \nu_i\leq a$, every $\nu_i$ is at most $a-t+1$. Also $q\leq a$, so $m\leq t+a$. Hence, for every $1\leq i\leq t$,
\begin{align*}
 (2m-3)\nu_i
&\leq (2a+2t-3)(a-t+1) \\ &= 2a^2-a-2t^2+5t-3 \\
 &\leq 2a^2-a-1
 =B,
\end{align*}
where the last inequality uses $t\geq 2$. Thus every non-complete $A_i$ satisfies
$$
        k(A_i)>B\geq (2m-3)\nu_i.
$$
The singleton pieces are complete symmetric, so condition~(iii) of Lemma~\ref{lem:jackson} holds.

It remains to check condition~(iv) of Lemma~\ref{lem:jackson}. Let
$$\Theta=2\sum_{i=1}^t\Big((m-1-i)\nu_i+1\Big)+(m-t).$$
Since $m=t+q$ and $q\leq a$, the expression $\Theta$ is maximized, for fixed $t$ and $\nu_1,\ldots,\nu_t$, by taking $q=a$. Therefore
$$ \Theta\leq2\sum_{i=1}^t\Big((t+a-1-i)\nu_i+1\Big)+a.$$
The coefficients $t+a-1-i$ decrease with $i$, and $\sum_i \nu_i\leq a$ with every $\nu_i\geq 1$. Hence the right-hand side is maximized when $\nu_1=a-t+1$ and $\nu_i=1$ for $2\leq i\leq t$. Consequently
\begin{align*}
 \Theta
&\leq2\left((a+t-2)(a-t+1)
+\sum_{i=2}^t(a+t-1-i)+t\right)+a \\ &=2a^2+2at-3a-t^2+3t.
\end{align*}
Call this last quantity $\Theta_t$.

By Lemma~\ref{lem:contraction} and~\eqref{eq:decomp-data},
$$ k(D^*)\geq k(D)-|S|\geq 2a^3+2-(t-1)B.$$
A direct calculation gives
\begin{align*}
2a^3+2-(t-1)B-\Theta_t
 &=(a-t)(2a^2+2-t)+1>0,
\end{align*}
because $1\leq t\leq a$. Therefore $k(D^*)>\Theta$, and Lemma~\ref{lem:jackson} applies to $D^*$, so $D^*$ is Hamiltonian. Lifting a Hamilton cycle of $D^*$ by replacing each $p_j$ with the path $P_j$ gives a Hamilton cycle of $D$. This proves Theorem~\ref{thm:main}. \end{proof}

\section*{Acknowledgements}
The authors thank Bo Ning (Nankai University) for sharing this problem with them.

\end{document}